\newtheorem{proposition}{Proposition}
 \newtheorem{theorem}{Theorem}
 \newtheorem{corollary}[theorem]{Corollary}
 \newtheorem{lemma}{Lemma}
 \newenvironment{proof}[1][Proof]{\textbf{#1.} }{\ \rule{0.5em}{0.5em}\vskip 12pt}
\begin{document}

\begin{frontmatter}

%\tnotetext[label0]{This work was supported by CONACyT, M\'exico 169407}

% Title, authors and addresses

% use the thanksref command within \title, \author or \address for footnotes;
% use the corauthref command within \author for corresponding author footnotes;
% use the ead command for the email address,
% and the form \ead[url] for the home page:
% \title{Title\thanksref{label1}}
% \thanks[label1]{}
% \author{Name\corauthref{cor1}\thanksref{label2}}
% \ead{email address}
% \ead[url]{home page}
% \thanks[label2]{}
% \corauth[cor1]{}
% \address{Address\thanksref{label3}}
% \thanks[label3]{}

\title{Vertex-monochromatic connectivity of strong digraphs}

% use optional labels to link authors explicitly to addresses:
% \author[label1,label2]{}
% \address[label1]{}
% \address[label2]{}

\author{Diego Gonz\'alez-Moreno}
\address{Departamento de Matem\'aticas Aplicadas y Sistemas, Universidad Aut\'onoma Metropolitana - Cuajimalpa.}
\ead{dgonzalez@correo.cua.uam.mx}
\thanks{This research was supported by CONACyT-M\'exico, project CB-222104}

\author{Mucuy-kak Guevara}
\address{Facultad de Ciencias, Universidad Nacional Aut\'onoma de M\'exico }
\ead{mucuy-kak.guevara@ciencias.unam.mx }
\thanks{This research was supported by PAPIIT M\' exico, project IN115816}

\author{Juan Jos\'e Montellano-Ballesteros}
\address{Instituto de Matem\'aticas, Universidad Nacional Aut\'onoma de M\'exico}
\ead{juancho@math.unam.mx}
\thanks{This research was supported by PAPIIT M\' exico, project IN104915}

\begin{abstract}
A vertex coloring of a strong digraph $D$  is a \emph{strong vertex-monochromatic connection coloring (SVMC-coloring)} if for every pair $u, v$ of vertices in $D$ there exists an $(u,v)$-monochromatic path  having  all the internal vertices of the same color. 
Let $smc_v(D)$ denote the maximum number of colors used in an SVMC-coloring of a digraph $D$.
In this paper  we determine the value of $smc_v(D)$ for the line digraph of a digraph. We also we give conditions to find the exact value of $smc_v(T)$, where $T$ is  a tournament.
\end{abstract}

\begin{keyword}
  Digraphs, Vertex-monochromatic colorings.

\noindent MSC 05C15, 05C20, 05C40
%% or \MSC[2008] code \sep code (2000 is the default)
\end{keyword}

\end{frontmatter}

\section{Introduction} 	

Caro and Yuster  \cite{CY11} introduced  the concept of monochromatic connection  of an edge colored graph. An edge-coloring of a graph  $G$ is a \emph{monochromatic-connecting coloring} if  there exists a monochromatic path between any two vertices of $G$. The study of  monochromatic connecting colorings arises from the rainbow connecting coloring problem,
in which rainbow paths are considered (a path is said to be \emph{rainbow} if no two edges of them are colored the same).
The monochromatic connection problem has also been studied in oriented graphs \cite{GMG17}.  An arc-coloring of a digraph $D$  is a \emph{strongly monochromatic connecting coloring} (SMC-coloring, for short) if for every pair $u, v$ of vertices in $D$   there exists  a directed $(u,v)$-monochromatic path and a  directed
$(v,u)$-monochromatic  path.
The \emph{strong monochromatic connection number} of a strong digraph $D$,
denoted by $smc(D)$, is  the maximum number of colors used in an SMC-coloring of  $D$.
Concerning the strong monochromatic connection number of an oriented graph the following result was proved in \cite{GMG17}.
 
\begin{theorem}\label{main}
Let $D$ be a strongly connected oriented graph of  size $m$, and let ${\Omega}(D)$ be  the minimum size of a strongly connected  spanning subdigraph of $D$.
Then
$$
smc(D)=m-\Omega(D)+1.
$$
\end{theorem}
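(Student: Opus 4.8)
The plan is to establish the two inequalities $smc(D)\ge m-\Omega(D)+1$ and $smc(D)\le m-\Omega(D)+1$ separately, with the upper bound resting on a structural lemma that I expect to be the crux.

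For the lower bound I would give an explicit coloring. Choose a strongly connected spanning subdigraph $H$ of $D$ with exactly $\Omega(D)$ arcs, which exists by the definition of $\Omega(D)$. Paint all arcs of $H$ with a single color and give each of the remaining $m-\Omega(D)$ arcs a distinct new color. Because $H$ is strong, spanning, and monochromatic, for every ordered pair $(u,v)$ the subdigraph $H$ already contains a directed monochromatic $(u,v)$-path; applying this to both $(u,v)$ and $(v,u)$ shows the coloring is an SMC-coloring. It uses $1+(m-\Omega(D))$ colors, so $smc(D)\ge m-\Omega(D)+1$.

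For the upper bound I would isolate the following claim: in every SMC-coloring of $D$ at least one color class induces a strongly connected spanning subdigraph. Granting it, let $f$ be an SMC-coloring with $k$ colors, let $c^\ast$ be such a color, and let $A_i$ denote the color classes. Then $|A_{c^\ast}|\ge\Omega(D)$ by minimality of $\Omega(D)$, while each remaining class is nonempty, so $m=\sum_i|A_i|\ge |A_{c^\ast}|+(k-1)\ge\Omega(D)+k-1$; hence $k\le m-\Omega(D)+1$ and $smc(D)\le m-\Omega(D)+1$. Combined with the construction this yields the stated equality, and it also gives the clean reformulation that a coloring is an SMC-coloring if and only if some color class is a strong spanning subdigraph.

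Proving the claim is the main obstacle. I would argue by contradiction, assuming no color class is strong and spanning, and work with monochromatic reachability: for a color $c$ let $D_c$ be the subdigraph of arcs of color $c$, and let $N_c^{+}(u)$ be the set of vertices reachable from $u$ along color-$c$ arcs. Failure of $c$ to be strong and spanning means $D_c$ has a nonempty proper set $T_c\subsetneq V$ that no color-$c$ arc leaves (a sink set of $D_c$), so no vertex of $T_c$ can color-$c$ reach any vertex outside $T_c$. The essential feature of the oriented setting is that a lone arc $u\to v$ witnesses connectivity only for the ordered pair $(u,v)$ and never for $(v,u)$, so the two-directional SMC requirement cannot be discharged by single arcs and forces genuinely long monochromatic paths; this is what rules out the ``forward/backward'' splittings that would defeat an analogous statement for undirected graphs. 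The delicate point is to combine the sink sets $T_c$ of the different colors, using that the out-closed sets of each $D_c$ are closed under union and intersection and exploiting the symmetric family obtained by reversing all arcs, so as to exhibit a single ordered pair $(u,v)$ that no color can connect, contradicting the SMC property. I expect the technical heart to be a careful extremal choice --- either a minimal counterexample or a vertex forced to sit in the sink sets of all relevant colors --- guaranteeing that such an uncoverable pair must exist.
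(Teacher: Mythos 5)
First, a point of reference: the present paper does not prove Theorem~\ref{main} at all --- it is imported verbatim from \cite{GMG17} and used as a black box. So there is no in-paper proof to compare against; your proposal has to be judged against the argument in the cited reference. Your lower bound is correct and is the standard construction (color a minimum strong spanning subdigraph monochromatically, give every other arc its own color), and your reduction of the upper bound to the claim ``every SMC-coloring has a color class inducing a strong spanning subdigraph'' is the right reduction: granting that claim, the count $m\ge |A_{c^\ast}|+(k-1)\ge \Omega(D)+k-1$ is exactly what is needed. You have correctly located where the entire difficulty of the theorem lives.

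The genuine gap is that you do not prove that claim; you only announce a strategy and concede that ``the technical heart'' is still to be found. That claim \emph{is} the theorem --- it is the main content of \cite{GMG17} --- so the proposal as written establishes only the easy inequality. Moreover, the strategy you sketch has a concrete weak point. You want to assemble, from one out-closed ``sink set'' $S_c$ per color, a single ordered pair $(u,v)$ with $u\in\bigcap_c S_c$ and $v\notin\bigcup_c S_c$; but (i) you must additionally arrange $uv\notin A(D)$, since a lone arc of any color already monochromatically connects its own ordered pair, and (ii) each color has a whole family of out-closed sets, and nothing in the lattice structure (closure under union and intersection \emph{within} one color) forces a simultaneous choice across colors with nonempty intersection and proper union. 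This is precisely the combinatorial difficulty, and the closure properties you invoke do not resolve it. A more workable line --- closer to how such statements are actually proved --- is to take a color $c$ and a set $S$ maximal with the property that $D_c$ restricted to $S$ is strongly connected, and to show $S=V(D)$ by exploiting the feature you correctly identified, namely that in an oriented graph the monochromatic return path for the pair $(v,u)$ with $uv\in A(D)$ has length at least $2$ and hence forces its color class to grow. Until the claim is proved by some such argument, the upper bound, and hence the equality, is not established.
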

Cai, Li and Wu \cite{QLW17} defined the vertex-version of the monochromatic connecting coloring concept.  A path in a vertex colored graph  is  \emph{vertex-monochromatic} if its internal vertices are colored the same. A vertex-coloring of a graph is a \emph{vertex-monochromatic  connecting coloring} (VMC-coloring) if there is a vertex-monochromatic path joining any two vertices of the graph.  
%The \emph{monochromatic vertex-connecting number} of a graph $G$, denoted by $vmc(G)$, is the maximum number of colors used in an VMC-coloring of $G$. 
This concept also can be extended to digraphs.  A directed path in a vertex colored digraph  is  \emph{vertex-monochromatic } if its internal vertices are colored the same. A vertex-coloring of a digraph is a \emph{strongly vertex-monochromatic connecting coloring} (SVMC-coloring)
if for every pair $u$ and  $v$ of vertices in $D$ there exists a directed  $(u,v)$-vertex-monochromatic path and a directed $(v,u)$-vertex-monochromatic path.
The \emph{monochromatic vertex-connecting number} of a strong digraph  $D$, denoted by  $smc_v(D)$, is the maximum number of colors that can be used in a strongly vertex-monochromatic  connecting coloring of $D$.

For an overview of the monochromatic and rainbow connection subjects we refer the reader to  \cite{CJMZ08,LS,LW18}. 
 
In this work we study the SVMC-colorings of strong digraphs. The paper is organized as follows. In section \ref{definitions} some basic definitions and notations are given. In section \ref{bounds}  lower and upper bounds for $smc_v(D)$ are presented. In section \ref{linedigraph} we focus on the family of line digraphs. Finally, in section \ref{tournament} we study the  strong vertex-monochromatic connection number of strongly connected tournaments.

\section{Definitions  and Notation}\label{definitions}

All the digraphs considered in this work are simple; that is, digraphs with no parallel arcs, nor loops are considered.
 If $(u,v)$ is an arc of $D$, then we use either $uv$ or $u\rightarrow v$ denote it. Two  vertices $u$ and $v$ of a digraph are   \emph{adjacent} if 
$u\rightarrow v$ or $v\rightarrow u$.
 All walks, paths and cycles are to be considered directed.  A digraph is  \emph{connected} if its underlying graph is connected.
{A digraph $D$ is \emph{unilateral} if, for every pair $u,v\in V(D)$, either $u$
is reachable from $v$, or $v$  is reachable from $u$ (or both)}.
 A \emph{$p$-cycle} is a cycle of length $p$. The minimum integer $p$ for which $D$ has a $p$-cycle is the \emph{girth of D} and it is denoted by $g(D)$. 
 A digraph $D$ is \emph{strongly connected} or \emph{strong}  if for every pair of vertices $u, v \subseteq V(D)$, the vertex $u$ is reachable from $v$ and the vertex $v$  is reachable from $u$.  {Given a {strong} digraph $D$, we use  $\Omega(D)$ to denote the minimum size of a strongly connected  spanning subdigraph of $D$}.
 % The {\it minimum in-degree} (resp. {\it out-degree})  of a subdigraph $H$ of $D$ will be denoted as $\delta^{-}(H)$ (resp. $\delta^{+}(H)$). 
  Let $u,v$ be two vertices of $D$. We say that $u$ \emph{dominates} $v$, or $v$ \emph{is dominated by} $u$, if $v\in N^+(u)$.
   A set of vertices $S\subset V(D)$ is a \emph{dominating set} if each vertex $v\in V(D)\setminus S$ is dominated by at least one vertex in $S$.
  A set of vertices $S$ is  an \emph{absorbing set} if for each vertex $v\in V(D)\setminus S$ there exists a vertex $u\in S$ such that $v\in N^-(u)$. An orientation of a complete graph is a \emph{tournament}. A subdigraph $H$ is said to be \emph{absorbing subdigraph} (\emph{dominating subdigraph}) if the set $V(H)$ is an absorbing set (dominating set) of $D$.

Let $D=(V, A)$ be a strong digraph. The {\it subdigraph induced} by a set of vertices $S$ is  denoted by $D[S]$.
Given a positive integer $p$, let $[p]=\{1, 2, \dots, p\}$.  A vertex {\it $p$-coloring} of $D$ is a surjective function  $\Gamma: V\rightarrow [p]$.  For each ``color'' $i\in [p]$ the set of vertices $\Gamma^{-1}(i)$ will be called the {\it chromatic class} (of color $i$), and  if $|\Gamma^{-1}(i)| =1$, the color $i$ and the chromatic class $\Gamma^{-1}(i)$ will be called {\it singular}.  
%For each $\Gamma$, $k_\Gamma$ will be the number of non-trivial colors of $\Gamma$ and, if $k_\Gamma \geq 1$,  we will assume that {$[k_\Gamma] =\{1, \dots, k_\Gamma\} \subseteq [p]$} is the set of non-trivial colors.
A subdigraph $H$ of $D$ will be called \emph{monochromatic} if  $A(H)$ is contained in a chromatic class.
%, and we will {say} that the color $i$ appears in  $H$ if $A(H)\cap \Gamma^{-1}(i) \not=\emptyset$.
A $p$-coloring  $\Gamma$ of $D$ is an {\it optimal coloring} if $p=smc(D)$ and $\Gamma$ is an SMC-coloring of $D$. 
%The \emph{line digraph} of a digraph is defined \cite{A1967} as follows: 
%The \emph{line digraph} $L(D)$ of a  digraph $D = (V, A)$ has $A$ for its vertex set and $(e, f)$ is an arc in $L(D)$ whenever the arcs $e$ and $f$ in $D$ have a vertex in common which is the head of $e$ and the tail of $f$. 
%A digraph $H$ is called a \emph{line digraph} if there exists a digraph $D$ such that $L(D)$ is isomorphicto $H$. %Harary and Norman \cite{HN1960} gave a characterization of line digraphs.
For general concepts we may refer the reader to \cite{BG,B}. 

\section{Bounds for $smc_v(D)$}\label{bounds}

In this section upper and lower bounds for the strong vertex-monochromatic connection number of a digraph $D$ are given. 
 
The next proposition is  the digraph version of the bounds obtained \cite{QLW17} for the monochromatic vertex-connection number of a graph.
\begin{proposition}\label{cotatrivial}
Let $D$ be a strong digraph of order $n$ and diameter $d$. Then
\begin{itemize}
\item[i)]  $smc_v(D)=n$ if and only if $d\le 2$.
\item[ii)]  If $d\ge 3$ then $smc_v(D)\le n-d+2$.
\end{itemize}
 \end{proposition}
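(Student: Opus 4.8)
The plan is to exploit a single structural observation: since a vertex-monochromatic $(u,v)$-path requires all of its \emph{internal} vertices to share one color, the length of such a path is tightly controlled by how many distinct colors appear among those internal vertices. First I would record the trivial bound $smc_v(D)\le n$, which holds because a $p$-coloring of $D$ is a surjection onto $[p]$ and there are only $n$ vertices.

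For part (i), I would observe that any SVMC-coloring attaining $n$ colors must assign a distinct color to every vertex, by the pigeonhole principle. Under such an injective coloring, a directed path with two or more internal vertices cannot be vertex-monochromatic, so the only vertex-monochromatic paths are those of length at most $2$ (zero or one internal vertex). Hence the injective coloring is an SVMC-coloring precisely when every ordered pair $(u,v)$ is joined by a directed path of length at most $2$, that is, exactly when $d(u,v)\le 2$ for all pairs, which is the condition $d\le 2$. Conversely, if $d\le 2$ then every ordered pair admits a directed path of length at most $2$, automatically vertex-monochromatic, so the injective coloring is an SVMC-coloring and $smc_v(D)=n$. This settles the equivalence.

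For part (ii), assume $d\ge 3$ and fix an SVMC-coloring using $smc_v(D)$ colors. Choose an ordered pair $(u,v)$ realizing the diameter, so $d(u,v)=d$. The coloring provides a vertex-monochromatic $(u,v)$-path $Q$; since every $(u,v)$-path has length at least $d(u,v)=d$, the path $Q$ has at least $d-1$ internal vertices, all of the same color. As $Q$ is a simple directed path, these internal vertices are pairwise distinct and distinct from the endpoints $u,v$, so at least $d-1$ vertices of $D$ share a common color. A color class of size at least $d-1$ forces the number of colors to be at most $n-(d-1)+1=n-d+2$, giving $smc_v(D)\le n-d+2$.

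The argument is short, and I do not anticipate a serious obstacle; the only point requiring care is the final counting step, namely translating ``some color class has size at least $d-1$'' into the bound $n-d+2$, together with verifying that the $d-1$ internal vertices of $Q$ are genuinely distinct (which is immediate once $Q$ is taken to be a simple directed path).
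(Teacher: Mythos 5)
Your proposal is correct and follows essentially the same route as the paper: injectivity of an $n$-coloring forces all vertex-monochromatic paths to have length at most $2$ for part (i), and a diameter-realizing pair forces a color class of size at least $d-1$ for part (ii). If anything, your counting in part (ii) is stated more carefully than the paper's (which phrases the same bound as ``$d-2$ redundant vertices'' yielding $n-(d-2)$), but the argument is identical.
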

\begin{proof}
\begin{itemize}
\item[\emph{i)}] Let $\Gamma: V(D)\rightarrow [n]$ be an SVMC-coloring of $D$.  Let $u,v$ be two vertices in $D$ such that $d(u,v)=d$.
Let $P$ be a $(u,v)$-vertex-monochromatic path. Since $\Gamma$ is an SVMC-coloring of $D$ and all the vertices of $D$ have a different color it follows that the length of $P$ is at most two. Therefore $d(u,v)\le 2$ and the result follows.
If $d\leq 2$,  the coloring that assigns to every vertex a different color is an SVMC-coloring of $D$. 
\item[\emph{ii)}] Let $u$ and $v$ be two vertices in $D$ such that $d(u,v)=d$. Let $P$ be a vertex-monochromatic path connecting $u$ and $v$. Observe that there are at least $d-2$ vertices in $P$ using the same color, therefore $smc_v(D)\le n-(d-2)$ and the result follows.
\end{itemize}
\end{proof}
The following example shows that the upper bound of item \emph{ii)} of the above theorem is tight. 
Let $D$ be the digraph with vertex set $V(D)=\{v_1,v_2,\dots ,v_n\}$ and  arc set $A(D)=\{v_iv_1,v_2v_i: i=3,4,\dots n\}\cup \{v_iv_2\}$.  Observe that $\Omega_v(D)=3$,
 $diam(D)=3$ and $smc_v(D)=n-diam(D)+2=n-1>n-\Omega_v(D)+1$.
\begin{figure}[h!]
\centering
\begin{pspicture}(3.1,4.25)
        \psset{unit=1.1, nodesep=3pt}         
       \cnode*(0,0.5){2pt}{u}
        \cnode*(2,0.5){2pt}{v}
        \cnode*(1,1){2pt}{w}
        \cnode*(1,1.75){2pt}{z}
      %    \cnode*(1,2.5){2pt}{x}
        \cnode*(1,3.25){2pt}{y}
        \rput(-.295,0.5){\rnode{ua.}{$v_1$}}
        \rput(2.295,.5){\rnode{vb.}{$v_2$}}
        \rput(1,1.25){\rnode{vb.}{$v_3$}}      
                \rput(1,2){\rnode{vb.}{$v_4$}}      
        \rput(1,2.7){\rnode{vd.}{$\vdots$}}
                \rput(1,3.62){\rnode{ve.}{$v_n$}}
        \rput(1,5){\rnode{w.}{}}	
        \rput(1,5.5){\rnode{z.}{}}
        \rput[bl](-0.755,3.1){$D:$}
        \ncline[linewidth=1.25pt]{->}{u}{v}
        \ncline[linewidth=1.25pt]{->}{w}{u}
        \ncline[linewidth=1.25pt]{->}{z}{u}
        \ncline[linewidth=1.25pt]{->}{x}{u}
           \ncline[linewidth=1.25pt]{->}{y}{u}
           
   \ncline[linewidth=1.25pt]{->}{v}{x}
        \ncline[linewidth=1.25pt]{->}{v}{w}
        \ncline[linewidth=1.25pt]{->}{v}{z}
        \ncline[linewidth=1.25pt]{->}{v}{y}%          \ncline[linewidth=1.25pt]{->}{y}{u}

%        \ncarc[linewidth=1pt, arcangle=0.21]{->}{u}{z}
%        \ncarc[linewidth=1pt, arcangle=0.21] {->}{z}{v}
 
        \ncline{->}{u1}{w1}
        \ncline{->}{w1}{v1}
        \ncline{->}{w1}{z1}

%        \cnode*(6.5,1.14){2pt}{u2}
%        \cnode*(6.5,1.926){2pt}{w2}
%        \cnode*(6.5,3){2pt}{z2}
%        \rput(6.2,4.05){\rnode{w2.}{}}
%        \rput(6.5,5.2){\rnode{z2.}{}}
%        \rput[bl](6.1,-0.1){$D/uv$}
%        \ncline{->}{w2}{z2}
%        \ncarc[linewidth=1pt, arcangle=30]{->}{u2}{w2}
%        \ncarc[linewidth=1pt, arcangle=30]{->}{w2}{u2}
%        \ncarc[linewidth=1pt, arcangle=70]{->}{u2}{z2}
%        \ncarc[linewidth=1pt, arcangle=70] {->}{z2}{u2}
\end{pspicture}
\caption{$smc_v(D)=n-diam(D)+2.$}\label{fig error}
\end{figure}
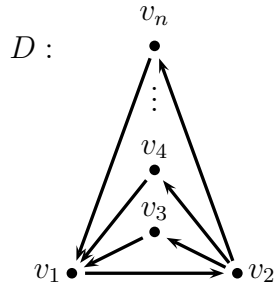 
\begin{theorem}
Let $D$ be a strong digraph and let  $\Gamma$ be an $SVMC$-coloring of $D$. Let $S$ be the set of singular chromatic classes of $\Gamma$ and let $D^*$ be the digraph induced by $V(D)\setminus S$.
\begin{itemize}
\item[$i)$] If for every vertex $v$ of $D$ there exists a vertex $x$ such that $d(v,x)\ge 3$, then $V(D^*)$ is a total absorbing set of $D$.
\item[$ii)$] If for every vertex $v$ of $D$ there exists a vertex $x$ such that $d(x,v)\ge 3$, then $V(D^*)$ is a total dominating set of $D$.
\item[$iii)$] If $g(D)\ge 5$, then $D^*$ is strong, absorbing and dominating set of $D$.
\end{itemize}
\end{theorem}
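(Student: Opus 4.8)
The plan is to dispatch items $i)$ and $ii)$ by one elementary observation about long vertex-monochromatic paths, and then to obtain $iii)$ from them together with a separate proof that $D^*$ is strong.

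First I would prove $i)$. Fix a vertex $v$ and, by hypothesis, choose $x$ with $d(v,x)\ge 3$. Any $(v,x)$-vertex-monochromatic path $P=v=p_0\to p_1\to\cdots\to p_k=x$ then satisfies $k\ge 3$, so $p_1$ and $p_2$ are both internal and therefore receive the same color; hence the chromatic class of $p_1$ has at least two vertices, i.e. $p_1\in V(D^*)$, and the arc $v\to p_1$ shows that $v$ is absorbed by $V(D^*)$. As $v$ is arbitrary, $V(D^*)$ is a total absorbing set. Item $ii)$ is the dual statement, obtained by reversing all arcs: the hypothesis yields $x$ with $d(x,v)\ge 3$, and the last two internal vertices of an $(x,v)$-vertex-monochromatic path force the vertex preceding $v$ to lie in $V(D^*)$, with an arc into $v$, so $V(D^*)$ is a total dominating set.

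For $iii)$ I would first observe that $g(D)\ge 5$ makes the hypotheses of $i)$ and $ii)$ automatic. Indeed, let $v$ be any vertex and let $C=v=c_0\to c_1\to\cdots\to c_{\ell-1}\to c_0$ be a shortest cycle through $v$, where $\ell\ge g(D)\ge 5$. If one had $d(v,c_3)\le 2$, then concatenating a shortest $(v,c_3)$-path with the subpath $c_3\to c_4\to\cdots\to c_{\ell-1}\to v$ of $C$ would produce a closed walk through $v$ of length at most $\ell-1$; a short case analysis on how this $(v,c_3)$-path meets $C$ then extracts a cycle through $v$ of length less than $\ell$, contradicting the minimality of $C$. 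Hence $d(v,c_3)\ge 3$, and the reverse-digraph argument gives a vertex at in-distance at least $3$ from $v$. Thus $i)$ and $ii)$ apply, so $V(D^*)$ is both absorbing and dominating (note also that $g(D)\ge 5$ rules out the all-singular coloring, since by Proposition \ref{cotatrivial} that coloring would force $\mathrm{diam}(D)\le 2$; hence $V(D^*)\ne\emptyset$).

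It remains to show that $D^*$ is strong, which is the heart of the matter. The key observation is: if $a,b\in V(D^*)$ and $b$ is not reachable from $a$ inside $D^*$, then every $(a,b)$-vertex-monochromatic path of $D$ must have the form $a\to s\to b$ with $s$ singular, for otherwise its internal vertices would all be non-singular, placing the whole path in $D^*$ and making $b$ reachable from $a$ there. Assume for contradiction that $D^*$ is not strong and let $Y$ be a terminal strong component of $D^*$, so that no arc of $D^*$ leaves $Y$ and $V(D^*)\setminus Y\ne\emptyset$. If some arc of $D^*$ enters $Y$ from outside, say $a\to y$ with $a\in V(D^*)\setminus Y$ and $y\in Y$, then $a$ is not reachable from $y$ in $D^*$, so the key observation applied to the pair $(y,a)$ gives a singular $s$ with $y\to s\to a$, and $a\to y\to s\to a$ is a $3$-cycle. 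Otherwise no arc of $D^*$ joins $Y$ to the rest in either direction; picking $w\in V(D^*)\setminus Y$ and $y\in Y$, neither is reachable from the other in $D^*$, so the key observation yields singular vertices $s,s'$ with $y\to s\to w$ and $w\to s'\to y$; here $s\ne s'$ (otherwise $s\to w\to s$ is a digon), so $y\to s\to w\to s'\to y$ is a $4$-cycle. Both conclusions contradict $g(D)\ge 5$, and therefore $D^*$ is strong. The main obstacle is exactly this last step: arranging the terminal-component dichotomy so that unreachability in $D^*$ is forced in the correct direction(s), which is precisely what turns the guaranteed monochromatic paths into short forbidden cycles.
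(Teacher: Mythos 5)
Your proof is correct. Items \emph{i)} and \emph{ii)} are argued exactly as in the paper: a monochromatic path realizing a distance of at least $3$ has at least two internal vertices of a common color, so that color class is non-singular and the first (resp.\ last) internal vertex lies in $V(D^*)$. For \emph{iii)}, however, you take a genuinely different route to strongness. You and the paper share the same key observation (if $b$ is not reachable from $a$ in $D^*$, any $(a,b)$-vertex-monochromatic path must be $a\to s\to b$ with $s$ singular, since two or more internal vertices would force a non-singular class). The paper then uses it to get a length-$2$ path $u\to s\to v$, deduces from $g(D)\ge 5$ that the reverse $(v,u)$-monochromatic path $P'$ has length at least $3$ and hence lies in $D^*$, and finally ``reverses'' each arc of $P'$ by monochromatic paths that, again by girth, are long enough to be forced into $D^*$, concatenating them into the missing $(u,v)$-walk. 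You instead pass to a terminal strong component $Y$ of $D^*$ and convert the guaranteed length-$2$ monochromatic paths directly into a forbidden $3$-cycle (when an arc of $D^*$ enters $Y$) or $4$-cycle (when $Y$ is isolated in $D^*$). Your version is arguably cleaner and also makes explicit two points the paper glosses over: that $g(D)\ge 5$ implies the distance hypotheses of \emph{i)} and \emph{ii)} (for which a shorter argument suffices: any $x\in N^-(v)$ with $d(v,x)\le 2$ would close a cycle of length at most $3$), and that $V(D^*)\neq\emptyset$. Both arguments are valid; yours trades the paper's concatenation of reversing paths for a sharper structural dichotomy on the condensation of $D^*$.
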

\begin{proof}
\emph{i)} Let $v$ be a vertex of $D$. Let $x\in V(D)$ such that $d(v,x)\ge 3$. Since $\Gamma$ is an SVMC-coloring there exists a $(v,x)$-vertex-monochromatic path $P=(v,x_1,\dots , x_r,x)$ with $r\ge 2$, such that the color of the internal vertices of $P$ belongs to a non-singular class, implying that $D^*$ is a total absorbing set.
\\
\emph{ii)} Let $v$ be a vertex of $D$ and let $x\in V(D)$ such that $d(x,v)\ge 3$. Let $P=(x,x_1,\dots , x_r,v)$, $r\ge 2$,  be a $(x,v)$-vertex-monochromatic path. Since the color of the internal verticres of $P$ are non-singular, then the set $V(D^*)$ is a total dominating set of $D$.
\\
\emph{iii)} Absorbing and dominating properties follows from items $i)$ and $ii)$. Let $u,v\in V(D^*)$ and suppose  that there is no $(u,v)$-path in $D^*$.
Since $\Gamma$ is an SVMC-coloring, there exists an  $(u,v)$-vertex-monochromatic path $P$ of length $2$ connecting and a $(v,u)$-vertex-monochromatic path $P'$ of length at least $3$ (because $g(D)\ge 5$). Suppose that $P'=(u,x_1,\dots ,x_{\ell},v)$. Note that the color used in the internal vertices of $P'$ is a non-singular color. Therefore $x_i\in V(D^*)$ for $i=1,2,\dots ,\ell$. Since $\Gamma$ is an SVMC-coloring of $D$ and $g(D)\ge 5$, it follows that the $(x_i,x_{i-1})$-vertex-monochromatic paths are totally contained in $D^*$. Hence $D^*$  is strongly connected.
\end{proof}

Let $D$ be a strong digraph and let $H$ be an absorbent, dominant and strong  subdigraph of $D$.  By coloring the vertices of $H$ with one single color and the remaining vertices with distinct colors, an SVMC-coloring of $D$ with $n-|V(H)|+1$ colors is obtained. 
%Therefore, if $D$ has order $n$, then $smc_v(D)\ge n-|V(H)|+1.$
Let $\Omega_v(D)$ denote the minimum order of an  absorbent, dominant and strong subdigraph of $D$. Therefore

\begin{equation}\label{cotaA}
smc_v(D)\ge n-\Omega_v(D)+1.
\end{equation}

\begin{theorem}\label{cota2}
Let $D$ be a strong digraph of order $n$ and girth $g(D)\ge 4$. Let $\Gamma$ be an SVMC-coloring of $D$ that uses $smc_v(D)$ colors. If $\ell$ is the minimum cardinality of a non-singular chromatic class of $\Gamma$, then 

$$
n-\Omega_v(D)+1\le smc_v(D)\le n-\Omega_v(D)+\frac{\Omega_v(D)}{\ell}.
$$
\end{theorem}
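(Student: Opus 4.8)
The inequality $smc_v(D)\ge n-\Omega_v(D)+1$ is precisely (\ref{cotaA}), so only the upper bound needs an argument. The plan is to read $smc_v(D)$ off the class sizes of $\Gamma$ and then combine this with a single structural fact about the non-singular vertices. List the chromatic classes as the $s$ singular classes together with the non-singular classes $C_1,\dots,C_t$; then $smc_v(D)=s+t$. The non-singular vertices are exactly $V(D^*)$, and each $|C_i|\ge\ell$ (with $\ell\ge 2$, since a non-singular class has at least two vertices), so $|V(D^*)|=n-s=\sum_{i=1}^{t}|C_i|\ge t\ell$, that is, $t\le (n-s)/\ell$.

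The pivotal input is that $V(D^*)$ induces an \emph{absorbent, dominant and strong} subdigraph of $D$; granting this, $n-s=|V(D^*)|\ge\Omega_v(D)$, i.e. $s\le n-\Omega_v(D)$. This is the girth-$\ge 4$ counterpart of part $iii)$ of the preceding theorem. The absorbing and dominating properties are obtained as in items $i)$ and $ii)$: for a vertex $v$ one takes a vertex $x$ with $d(v,x)\ge 3$ (resp. $d(x,v)\ge 3$) and a vertex-monochromatic $(v,x)$- (resp. $(x,v)$-) path; its internal vertices lie in a non-singular class, hence in $V(D^*)$, so the incident arc witnesses that $v$ is absorbed (resp. dominated) by $D^*$. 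For strong connectivity one argues as in part $iii)$: if some $u,v\in V(D^*)$ had no $(u,v)$-path inside $D^*$, then every vertex-monochromatic $(u,v)$-path has length exactly $2$, while a vertex-monochromatic $(v,u)$-path, being internally non-singular, lies in $D^*$; chaining the SVMC-paths between its successive internal vertices is then meant to produce the missing $(u,v)$-route inside $D^*$, a contradiction.

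Once $s\le n-\Omega_v(D)$ is available the estimate follows mechanically. Using $t\le(n-s)/\ell$,
\[
smc_v(D)=s+t\le s+\frac{n-s}{\ell}=\Big(1-\frac{1}{\ell}\Big)s+\frac{n}{\ell}.
\]
As $\ell\ge 2$, the factor $1-1/\ell$ is positive, so the right-hand side is increasing in $s$; substituting its maximal admissible value $s=n-\Omega_v(D)$ and simplifying yields exactly $n-\Omega_v(D)+\Omega_v(D)/\ell$, which is the asserted upper bound.

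The \emph{main obstacle} is the structural step, and within it the strong connectivity of $D^*$. In part $iii)$ the hypothesis $g(D)\ge 5$ was used to force the auxiliary return path to have length at least $3$; under the weaker hypothesis $g(D)\ge 4$ a length-$2$ return path is allowed, so one must exclude a configuration $u\to x\to v\to y\to u$ with $x,y$ singular and no alternative $(u,v)$- or $(v,u)$-route through $V(D^*)$. I expect to rule this out by invoking the SVMC-paths joining $u$ and $v$ to the remaining non-singular vertices together with the bound $g(D)\ge 4$, and, if necessary, the maximality of $\Gamma$ among SVMC-colorings. A secondary point to verify is that the girth hypothesis indeed supplies, for each vertex, a vertex at distance at least $3$, so that the absorbing and dominating arguments of items $i)$ and $ii)$ apply.
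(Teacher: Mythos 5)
Your argument is essentially the paper's: your count $smc_v(D)=s+t$ with $t\le (n-s)/\ell$ is the paper's computation $n\ge \ell\, smc_v(D)-(\ell-1)a_1$ with $s=a_1$, and the final step---bounding $s\le n-\Omega_v(D)$ and using monotonicity in $s$---is exactly how the paper concludes. The structural step you single out as the main obstacle, namely that the non-singular vertices $V(D^*)$ induce a strong, absorbing and dominating subdigraph when only $g(D)\ge 4$ is assumed, is also the step the paper does not actually justify: it simply cites the preceding theorem, whose part $iii)$ is stated and proved only under $g(D)\ge 5$, so the length-$2$ return configuration you describe ($u\to x\to v\to y\to u$ with $x,y$ singular) is not excluded by anything in the paper either. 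Your secondary point is easily settled: in a strong digraph with $g(D)\ge 4$, any in-neighbour (resp.\ out-neighbour) $x$ of a vertex $v$ satisfies $d(v,x)\ge g(D)-1\ge 3$ (resp.\ $d(x,v)\ge 3$), so the absorbing and dominating parts do go through. In short, you have reproduced the paper's proof and correctly located the one place where it is incomplete---the incompleteness is the paper's, not an extra defect of your write-up.
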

\begin{proof}
The left hand of the inequality is a consequence of (\ref{cotaA}).
Let $\Gamma$ be an SVMC-coloring of $D$ that uses $smc_v(D)$ colors. Let $a_i$ denote the number of the chromatic classes with cardinality $i$. Observe that $a_1+a_2+\dots +a_r=smc_{v}(D)$, where $r$ is the cardinality of the largest chromatic class . Let $\ell$ be the minimum cardinality of a non-singular chromatic class. Hence

$$
n=a_1+\sum_{i=\ell}^r ia_i\ge \ell (a_1+ \sum_{i=\ell}^r ia_i)-(\ell-1)a_i=\ell smc_v(D)-(\ell-1)a_1.
$$
Therefore,

$$
smc_v(D)\le \frac{n+(\ell-1)a_1}{\ell}.
$$

Let $D^*$ be the subdigraph of $D$ induced by the non-singular classes of $\Gamma$. By Theorem \ref{absdomcon} the set $V(D^*)$ is  strong, absorbing and dominating. Then 
%Since the number of singular classes of $\Gamma$ is $a_1$ and by Theorem \ref{absdomcon} it follows that $D^*$ is a strong, absorbent and dominant subdigraph of $D$. Then it follows that 
$a_1+\Omega_v(D)\le n$. Hence %$a_1\le n-\Omega_v$

$$
smc_v(D)\le \frac{n+(\ell-1)a_1}{\ell}\le \frac{n+(\ell-1)(n-\Omega_v(D))}{\ell}=n-\Omega_v(D)+\frac{\Omega_v(D)}{\ell}.
$$
\end{proof}

\begin{corollary}
Let $D$ be a strong digraph of order $n$. Then 

$$
n-\Omega_v(D)+1\le smc_v(D)\le n-\frac{\Omega_v(D)}{2}.
$$
\end{corollary}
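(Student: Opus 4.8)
The plan is to read both inequalities off the results already established, since the corollary is essentially the extremal case of Theorem \ref{cota2}. The left-hand inequality $n-\Omega_v(D)+1\le smc_v(D)$ is precisely (\ref{cotaA}), so no new argument is needed there.

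For the right-hand inequality I would specialize Theorem \ref{cota2}. Fix an SVMC-coloring $\Gamma$ of $D$ that attains $smc_v(D)$ colors, and let $\ell$ be the minimum cardinality of a non-singular chromatic class of $\Gamma$. By the very definition of ``non-singular'' every such class has at least two vertices, so $\ell\ge 2$. Since $t\mapsto \Omega_v(D)/t$ is decreasing, we get $\Omega_v(D)/\ell\le \Omega_v(D)/2$, and substituting into the upper bound of Theorem \ref{cota2} yields
$$
smc_v(D)\le n-\Omega_v(D)+\frac{\Omega_v(D)}{\ell}\le n-\Omega_v(D)+\frac{\Omega_v(D)}{2}=n-\frac{\Omega_v(D)}{2},
$$
which is exactly the claimed bound.

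The one point that needs care, and what I regard as the main obstacle, is the well-definedness of $\ell$: Theorem \ref{cota2} tacitly assumes that $\Gamma$ has at least one non-singular class and is stated under the girth hypothesis $g(D)\ge 4$. I would justify both at once. If $D$ is strong with $g(D)\ge 4$, then for any arc $u\rightarrow v$ a return path from $v$ to $u$ of length $1$ or $2$ would create a $2$- or $3$-cycle, which is forbidden; hence $d(v,u)\ge 3$ and $\mathrm{diam}(D)\ge 3$. By Proposition \ref{cotatrivial} this forces $smc_v(D)\le n-1<n$, so an optimal coloring cannot have all of its classes singular; therefore a non-singular class exists and $\ell\ge 2$ is well-defined. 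In this light the corollary is simply the endpoint $\ell=2$ of Theorem \ref{cota2}, and the only content beyond that theorem is verifying the hypotheses under which $\ell$ makes sense.
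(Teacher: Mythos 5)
Your derivation is exactly the one the paper intends (it gives no explicit proof): the left inequality is (\ref{cotaA}), and the right one is Theorem \ref{cota2} with $\ell\ge 2$, since a non-singular class has at least two vertices. Your extra step checking that $\ell$ is well defined --- via $g(D)\ge 4 \Rightarrow \mathrm{diam}(D)\ge 3 \Rightarrow smc_v(D)<n$ by Proposition \ref{cotatrivial}, so some class is non-singular --- is correct and is a point the paper glosses over. The one caveat is that your argument (necessarily) proves the corollary only under the hypothesis $g(D)\ge 4$ inherited from Theorem \ref{cota2}, whereas the corollary as printed asserts it for every strong digraph; without that hypothesis the upper bound can fail (e.g.\ the complete digraph has $smc_v(D)=n$ and $\Omega_v(D)=1$, so $smc_v(D)>n-\Omega_v(D)/2$), so the missing hypothesis is a defect of the statement, not of your proof.
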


%\begin{remark}\label{cotainf}
%Let $D$ be a strong digraph and let $H$ be an absorbent, dominant and strong  subdigraph of $D$.  By coloring the vertices of $H$ with one single color and the remaining vertices with distinct colors,  an SMCV-coloring of $D$ with 
%$n-|V(H)|+1$ colors is obtained. Therefore,
%$$
%smc_v(D) \ge |V(D)|-|V(H)|+1.
%$$
%\end{remark}
%Let 
%$$\Omega_v=\min\{|V(H)|: \mbox{$H$ is an absorbent, dominant and strong  subdigraph of $D$}\}.$$
\section{Line digraphs}\label{linedigraph}

Recall that the {line digraph} $L(D)$ of a  digraph $D = (V, A)$ has $A$ for its vertex set and $(e, f)$ is an arc in $L(D)$ whenever the arcs $e$ and $f$ in $D$ have a vertex in common which is the head of $e$ and the tail of $f$. 
A digraph $D$ is called a \emph{line digraph} if there exists a digraph $H$ such that $L(H)$ is isomorphic to $D$. 
In this section we determine the value of $smc_v(D)$ for a line digraph $D$.

\begin{proposition}\label{linea}
Let $D$ be a strong digraph and let $H$ be a spanning and strong subdigraph of $D$.  
If $L(H)$ is the subdigraph of  $L(D)$  induced by the arcs of $H$, then $L(H)$ is a strong, absorbing and dominating subdigraph of $L(D)$.
\end{proposition}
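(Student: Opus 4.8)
The plan is to work directly with the identification $V(L(H)) = A(H)$: the vertices of $L(H)$ are exactly the arcs of $H$, and since $L(H)$ is the subdigraph of $L(D)$ induced by $A(H)$, two vertices $e, f \in A(H)$ are joined by an arc $(e,f)$ in $L(H)$ precisely when the head of $e$ equals the tail of $f$. The three claims --- strong, absorbing, dominating --- will each be reduced to the hypothesis that $H$ is strong and spanning, the key consequence being that every vertex of the strong digraph $H$ has positive in-degree and positive out-degree in $H$.

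For strong connectivity, I would take two arbitrary arcs $e = (a,b)$ and $f = (c,d)$ of $H$ and lift a walk of $H$ to a walk of $L(H)$. Since $H$ is strong there is a $(b,c)$-path $b = v_0, v_1, \dots, v_k = c$ in $H$; writing $a_i$ for the arc $(v_{i-1}, v_i) \in A(H)$, the sequence $e, a_1, a_2, \dots, a_k, f$ is a directed walk in $L(H)$, because consecutive arcs share the required head--tail vertex and all of them lie in $A(H)$. (If $b = c$ the walk degenerates to the single arc $(e,f)$ of $L(H)$, which exists since the head of $e$ equals the tail of $f$.) Hence $e$ reaches $f$ in $L(H)$, and as $e, f$ were arbitrary, $L(H)$ is strong.

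For the absorbing and dominating properties, I would fix a vertex $w$ of $L(D)$ lying outside $L(H)$, that is, an arc $w = (x,y) \in A(D) \setminus A(H)$, and produce the required neighbour inside $A(H)$. Because $H$ is spanning we have $x, y \in V(H)$, and because $H$ is strong, $y$ has an out-arc $f = (y,z) \in A(H)$ and $x$ has an in-arc $e' = (x', x) \in A(H)$. The head of $w$ is $y$, which is the tail of $f$, so $(w, f)$ is an arc of $L(D)$ with $f \in V(L(H))$, showing $w$ is absorbed by $L(H)$; dually, the head of $e'$ is $x$, which is the tail of $w$, so $(e', w)$ is an arc of $L(D)$ with $e' \in V(L(H))$, showing $w$ is dominated by $L(H)$. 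Since $w \notin A(H)$ while $f, e' \in A(H)$, these are genuine arcs between distinct vertices, and both remaining properties follow.

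Most of the argument is bookkeeping about heads and tails; the one step that needs care is the strong-connectivity reduction, where I must make sure the lifted walk stays inside the induced subdigraph $L(H)$ and not merely in $L(D)$ --- this is guaranteed precisely because the intermediate arcs $a_1, \dots, a_k$ are drawn from a path of $H$ and hence belong to $A(H)$ --- together with the degenerate case $b = c$, which must be checked separately so that the claim holds even when the connecting path has length zero.
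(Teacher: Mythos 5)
Your proof is correct and follows essentially the same route as the paper: the absorbing/dominating argument via an in-arc at the tail and an out-arc at the head of each arc of $D$ is identical, and your path-lifting argument simply fills in the detail behind the paper's one-line assertion that $H$ strong implies $L(H)$ strong. No issues.
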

\begin{proof}
Let $D$ be a strong digraph and let $H$ be a spanning strong subdigraph of $D$. Since $H$ is strong it follows that $L(H)$ is a strong subdigraph of $L(D)$. 
Furthermore, since $H$ is an  spanning and strong subdigraph of $D$ for every  vertex $e=(u,v)$ of $L(D)$ there are two vertices $f_1$ and $f_2 $ of $L(H)$ such that $f_1=(w_1,u)$ and $f_2=(v,w_2)$. Therefore $f_1$ dominates the vertex $e$ and $f_2$ absorbs the vertex $e$.
\end{proof}

Let $H$ be the line digraph a of digraph $D$.
Let  $\Gamma: V(H)\longrightarrow [k]$ be an SVMC-coloring of $H$.  Notice that the coloring $\Gamma$ induces a coloring $\Gamma'$ of the arcs in $D$. Let $\Gamma':A(D)\longrightarrow [k]$ the coloring that assigns to each arc $e$ in $D$ the color $\Gamma (e)$ of the vertex $e\in V(H)$.

Let $D$ be a strong  digraph. An ordered pair $(u,v)$ of vertices of $D$ is said to be a \emph{bad pair} of $D$ if $N^+(u)=\{v\}$ and $N^-(v)=\{u\}$. Observe that if $(u,v)$ is a bad pair then ${uv}$ is an arc of $D$ and the pair $(v,u)$ is not a bad pair.

\begin{lemma}\label{goodpair}
Let $D$ be a strong digraph and let $H$ be the line digraph of $D$. 
%Let $u$ and $v$ be two vertices of $D$.   
Let $\Gamma$ be an SVMC-coloring of  $H$ and let $\Gamma'$ be the arc coloring of $D$ that assigns to each arc $e\in A(D)$ the color $\Gamma (e)$ of vertex $e\in V(H)$.
Given two vertices $u$ and $v$ in $D$ there exists an $(v,u)$-monochromatic path in $D$ if one of the following conditions holds.
\begin{itemize}
\item[{i)}]  The ordered pair $(u,v)$ is not a bad pair.
\item[{ii)}] The ordered pair $(u,v)$ is a bad pair and there exists an arc ${vw}$ in $D$ such that $(v,w)$ is not a bad pair.
\item[{iii)}] The ordered pair $(u,v)$ is a bad pair and there exists an arc ${wu}$ in $D$ such that $(w,u)$ is not a bad pair.
\item[{iv)}] If the previous cases do not happen and $D$ is different from $C_3$.
\end{itemize}
\end{lemma}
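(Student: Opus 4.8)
The whole lemma rests on one dictionary between $D$ and $H=L(D)$, which I would set up first. Write each vertex of $H$ as the arc $e=xy$ of $D$ it represents, with tail $x$ and head $y$. A directed path $P=(e_0,e_1,\dots,e_k)$ in $H$ is, by matching common endpoints, a directed walk in $D$ traversing $e_0,\dots,e_k$ in order. The key translation is: if $P$ is an $(e_0,e_k)$\emph{-vertex-monochromatic} path of length $k\ge 2$ whose internal vertices all get color $c$, then the internal arcs $e_1,\dots,e_{k-1}$ form a monochromatic (color $c$) walk in $D$ from $\mathrm{head}(e_0)$ to $\mathrm{tail}(e_k)$, hence a monochromatic $(\mathrm{head}(e_0),\mathrm{tail}(e_k))$-path. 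Dually, a monochromatic $(v,u)$-path in $D$ is exactly a monochromatic (all vertices equicolored) path in $H$ from an out-arc of $v$ to an in-arc of $u$. These two remarks are what I would use everywhere.

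For item $i)$ I would just apply the translation. Since $D$ is strong, $v$ has an in-arc $\alpha$ and $u$ has an out-arc $\beta$; the only arc that is both is $uv$, so non-badness of $(u,v)$ (either $N^-(v)\neq\{u\}$ or $N^+(u)\neq\{v\}$, or $uv$ is not even an arc) lets me take $\alpha\neq\beta$. As $u\neq v$, a length-one $(\alpha,\beta)$-path would force $\mathrm{head}(\alpha)=\mathrm{tail}(\beta)$, i.e. $v=u$; hence the vertex-monochromatic $(\alpha,\beta)$-path supplied by $\Gamma$ has length $\ge2$, and since $\mathrm{head}(\alpha)=v$, $\mathrm{tail}(\beta)=u$, the translation delivers a monochromatic $(v,u)$-path. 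From here on $(u,v)$ is a bad pair, so $uv$ is the unique in-arc of $v$ and the unique out-arc of $u$ (which is exactly why the choice above collapses); set $c^\ast=\Gamma(uv)$. The engine for the rest is a \emph{junction principle}: in $H$, $uv$ is the unique $H$-in-neighbour of every out-arc of $v$ and the unique $H$-out-neighbour of every in-arc of $u$; consequently $uv$ can occur as an \emph{internal} vertex of a vertex-monochromatic path only when that path's internal color is $c^\ast$, and these uniqueness facts let me \emph{force} $uv$ (and the forced thread through it) onto suitably chosen paths.

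The common plan for $ii)$, $iii)$, $iv)$ is: produce a monochromatic $c^\ast$ path on the ``far'' side, pin the bridging arc(s) at $v$ and $u$ to $c^\ast$ with the junction principle, and splice. I would treat $iv)$ first as the clean instance. There every out-arc of $v$ and in-arc of $u$ is bad, which forces $d^\pm(v)=d^\pm(u)=1$, so $u,v$ sit on a forced thread $w'\to u\to v\to w$ with $N^-(w)=\{v\}$, $N^+(w')=\{u\}$; if $w=w'$ then $\{u,v,w\}$ is an isolated strong component and $D=C_3$, excluded, so $w\neq w'$. Choosing an in-arc $g$ of $w'$ and an out-arc $h$ of $w$, the segment $g,\,w'u,\,uv,\,vw,\,h$ is the forced start of every $(g,h)$-path, so the vertex-monochromatic $(g,h)$-path has $w'u,uv,vw$ internal; by the junction principle all three are colored $c^\ast$. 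The vertex-monochromatic path from $uv$ to $w'u$ starts $uv,vw,\dots$ (forced), so $vw$ is internal and its internal color is $\Gamma(vw)=c^\ast$, and by the translation its internal vertices give a monochromatic $c^\ast$ $(w,w')$-path. Splicing $vw$, this $(w,w')$-path, and $w'u$ (all color $c^\ast$) yields the monochromatic $(v,u)$-path. Items $ii)$ and $iii)$ are dual under reversing all arcs, so I would only do $ii)$: applying item $i)$ to the non-bad pair $(v,w)$ gives a monochromatic $(w,v)$-path whose last arc must be $uv$, hence of color $c^\ast$, whose prefix is a monochromatic $c^\ast$ $(w,u)$-path; it then remains to pin some out-arc $vx$ of $v$ (with $(v,x)$ non-bad) to $c^\ast$ via the junction principle — by placing the two path-endpoints on opposite sides of the junction $uv$ (two in-arcs of $u$ if $d^-(u)\ge2$, an in-arc of $u$ together with an out-arc of an in-degree-one out-neighbour otherwise, and, when $d^+(v)=1$, the bi-forced link $uv\to vw$ exactly as in $iv)$) — after which splicing finishes.

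The main obstacle is precisely this pinning of the bridging arcs at $v$ and at $u$ to the color $c^\ast=\Gamma(uv)$; the translation and item $i)$ hand over the far-side monochromatic path cheaply, but making its first and last arcs match $c^\ast$ needs the junction principle, and the junction principle needs some vertex on the relevant forced thread to carry an extra in- or out-arc. This is exactly the role of the hypothesis $D\neq C_3$: the only way all such slack disappears is that the forced thread closes into a full directed cycle $C_n$ equal to $D$, and for $n\ge4$ the ``long way round'' constraints of any SVMC-coloring collapse $\Gamma$ to a constant (so every path is monochromatic), whereas for $n=3$ those windows are too short to force anything, which is the genuine exception recorded in $iv)$.
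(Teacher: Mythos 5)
Your dictionary between vertex-monochromatic paths in $H=L(D)$ and arc-monochromatic paths in $D$ is exactly the one the paper uses, and your proof of item \emph{i)} coincides with the paper's: pick an in-arc $\alpha$ of $v$ and an out-arc $\beta$ of $u$ with $\alpha\neq\beta$ (possible precisely because $(u,v)$ is not bad) and translate the vertex-monochromatic $(\alpha,\beta)$-path. Your item \emph{iv)} is in fact \emph{more} complete than the paper's, whose printed argument for \emph{iv)} only explains why $C_3$ must be excluded and never constructs the $(v,u)$-path; your forced-thread argument via $g,\,w'u,\,uv,\,vw,\,h$ and the $(uv,w'u)$-path does the job (up to the degenerate digon configurations, which are trivial anyway).

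The gap is in items \emph{ii)} and \emph{iii)}. Your plan there is to \emph{pin} some out-arc $vx$ of $v$ with $(v,x)$ non-bad to the color $c^\ast=\Gamma(uv)$ by exhibiting a vertex-monochromatic path of $H$ forced to contain $uv$ and $vx$ as consecutive internal vertices, and your list of devices (two in-arcs of $u$ when $d^-(u)\ge 2$; an in-arc of $u$ paired with an out-arc of an in-degree-one out-neighbour of $v$; the bi-forced link when $d^+(v)=1$) is not exhaustive: when $d^-(u)=1$, every out-neighbour of $v$ has in-degree at least $2$, and $d^+(v)\ge 2$, none of the three applies. Moreover the third device does not work as stated, since in the $(uv,\cdot)$-path the vertex $uv$ is an \emph{endpoint}, so the common internal color is $\Gamma(vw)$, not known to equal $c^\ast$; in \emph{iv)} you repaired this by first running a path from an in-arc of $w'$, which is not available here without further case analysis. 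These holes are patchable (e.g.\ by choosing the second endpoint to be an in-arc of the unique in-neighbour $w'$ of $u$ with tail different from $v$), but as written the argument does not close. The paper avoids the pinning issue entirely: in \emph{ii)} it splits on whether $(w,u)$ is bad and applies item \emph{i)} to auxiliary non-bad pairs ($(w,u)$, or $(v,z)$ and $(z,u)$ for a vertex $z$ witnessing that $(v,w)$ is not bad), using only that every monochromatic path leaving $u$ must begin with $uv$ and every monochromatic path entering $v$ must end with $uv$, so those paths are automatically of color $c^\ast$ and automatically contain the $(v,\cdot)$- and $(\cdot,u)$-subpaths to be spliced. You should either adopt that device or complete your case enumeration.
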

\begin{proof}
Let $D$ be a strong digraph and let $H=L(D)$. Let $u,v$ be two vertices of $D$. Let $\Gamma$ be an SVMC-coloring of $H$ and let $\Gamma'$ be the arc coloring of $D$ induced by $\Gamma$.
\begin{itemize}
\item[\emph{{i)}}] Suppose that $(u,v)$ is not a bad pair. Assume that there exists a vertex $w\in N^-(v)$ such that $w\neq u$. Since $D$ is strong there exists a vertex $w_1\in N^+(u)$ (it may happen that $w_1=v$). 
Since $\Gamma$ is an SVMC-coloring of $H$, there exists an vertex-monochromatic path $P=({wv},{vv_1},\dots , {v_{l-1}v_{l}},{v_lu}, {uw_1})$ connecting the vertices ${wv}$ and ${uw_1}$ of $H$. The path $P$ induces a monochromatic path connecting the vertices $v$ and $u$ in $D$.
If there exists a vertex $w\in N^+(u)$ such that $w\neq v$. 
Since $H$ is strong there exists a vertex $w_1\in N^-(v)$ and there is a vertex-monochromatic path $P$ connecting the vertices ${w_1v}$ and ${uw}$ which induces a monochromatic path connecting the vertices $v$ and $u$ in $D$.

\item[\emph{ii)}] Suppose that $(u,v)$ is a bad pair and there exists a vertex $w\in N^+(v)$ such that $(v,w)$ is not a bad pair. By the above item there is a $(w,v)$-monochromatic path $P$ of the same color of the arc ${uv}$. 
If $(w,u)$ is not a bad pair then there exists a $(u,w)$-monochromatic path $P'$ containing the arc $uv$ (because $(u,v)$ is a bad pair). The union of $P$ and $P'$ contains a $(v,u)$-monochromatic path. If $(w,u)$ is a bad pair, then $N^+(w)=\{u\}$ and $N^-(u)=\{w\}$. 
Since $(v,w)$ is not a bad pair there exists a vertex $z$ such that $z\in N^-(w)\setminus\{v\}$ or $z\in N^+(v)\setminus\{w\}$. Note that $(v,z)$,$(u,z)$  and $(z,u)$ are not  bad pairs because $N^+(v)\neq \{z\}$ and $v$ and $u$ and $z$ are not adjacent.
 Since $(v,z)$ and $(z,u)$ are not  bad pairs there is a $(z,v)$-monochromatic path $P$ and a $(u,z)$-monochromatic path $P'$ of the same color because both paths contains
 the arc ${uv}$. The union of $P$ and $P'$ contains a $(v,u)$-monochromatic path.

\item[\emph{iii)}] Suppose that the ordered pair $(u, v)$ is a bad pair and there exists an arc $wu$ in $D$ such
that $(w, u)$ is not a bad pair.  By item \emph{i)} there exists a $(u,w)$-monochromatic path $P$ that uses the arc $uv$ and therefore of the
same color of $uv$.  
If $(v,w)$ is not a bad pair, then by item \emph{i)} there exists a $(w,v)$-monochromatic $P'$ containing the arc $uv$ and therefore of the same color of $P$. 
The union of $P'$ and $P$ contains a $(v,u)$-monochromatic path in $D$. Continue assuming that $(v,w)$ is a bad pair. Therefore there is a vertex such that either $z\in N^+(w)$ or $z\in N^-(u)$. Observe that $(z,v),(v,z)$ and $(w,z)$ are not bad pairs. Hence there exists $(v,z)$-monochromatic path  a $(z,v)$-monochromatic path and a $(z,w)$-monochromatic path in $D$. The union of these paths contains a $(v,u)$-monochromatic path. 

\item[\emph{iv)}] If $D$ is isomorphic to $C_3$, then $H$  is also isomorphic to $C_3$. Since $smc_v(C_3)=3$ (see item \emph{i)} of Proposition \ref{cotatrivial}).  The coloring $\Gamma$ induces a coloring $\Gamma'$ of the arcs in $D$ with three colors that is not an SMC-coloring of $D$.
\end{itemize}
\end{proof}

\begin{theorem}
Let $D$ be a strong directed graph different from the cycle of length $3$. Then 
$$
scm_v(L(D))=smc(D).
$$
\end{theorem}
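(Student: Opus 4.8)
The plan is to prove the two inequalities $smc_v(L(D))\ge smc(D)$ and $smc_v(L(D))\le smc(D)$ separately, using throughout the canonical identification of the vertices of $L(D)$ with the arcs of $D$. Under this identification a vertex-$p$-coloring $\Gamma$ of $L(D)$ is literally the same object as an arc-$p$-coloring $\Gamma'$ of $D$ with the same number of colours, namely $\Gamma'(e)=\Gamma(e)$; moreover a vertex-monochromatic directed path of $L(D)$ corresponds to a walk of $D$ whose interior arcs share a colour, and conversely. Thus the statement will follow once I show that optimal SVMC-colorings of $L(D)$ and optimal SMC-colorings of $D$ can be transferred to one another without changing the palette.

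For the lower bound I would start from an SMC-coloring $c$ of $D$ attaining $smc(D)$ and lift it to the vertex-coloring $\gamma$ of $L(D)$ defined by $\gamma(e)=c(e)$. Given two vertices $e=uv$ and $f=pq$ of $L(D)$, the coloring $c$ provides a monochromatic $(v,p)$-path $Q$ in $D$, say of colour $\alpha$; concatenating $e$, the arcs of $Q$ and $f$ yields a directed $(e,f)$-path in $L(D)$ whose internal vertices are exactly the arcs of $Q$, all coloured $\alpha$, hence vertex-monochromatic (and if $v=p$ the path $(e,f)$ already works). The symmetric choice of a monochromatic $(q,u)$-path handles the reverse direction. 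The one routine check is that this sequence is a genuine path: since $Q$ starts at $v$ and ends at $p$ it can contain neither the arc $e$, which enters $v$, nor the arc $f$, which leaves $p$, so no vertex of $L(D)$ repeats. This makes $\gamma$ an SVMC-coloring with $smc(D)$ colours, giving $smc_v(L(D))\ge smc(D)$.

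For the upper bound I would take an SVMC-coloring $\Gamma$ of $L(D)$ realizing $smc_v(L(D))$ and pass to the induced arc-coloring $\Gamma'$ of $D$. Here essentially all of the work has already been done in Lemma \ref{goodpair}: its four conditions are mutually exhaustive once $D\ne C_3$, so for every ordered pair $(u,v)$ the lemma produces a $(v,u)$-monochromatic path in $D$. Applying it to both $(u,v)$ and $(v,u)$ gives monochromatic paths in each direction, so $\Gamma'$ is an SMC-coloring of $D$ using $smc_v(L(D))$ colours, whence $smc(D)\ge smc_v(L(D))$. Together with the lower bound this yields $smc_v(L(D))=smc(D)$.

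I expect the genuine difficulty to sit entirely in the upper bound, that is, in Lemma \ref{goodpair}, and this is also the only place where the hypothesis $D\ne C_3$ is needed: for $D=C_3$ one has $L(C_3)\cong C_3$ with $smc_v=3$ but $smc(C_3)=1$, so the bad-pair analysis of the lemma must, and does, break down precisely there. The lower bound, by contrast, is robust and never mentions bad pairs; should one prefer an argument free of the path-lifting bookkeeping, the same inequality follows from Proposition \ref{linea} and the bound (\ref{cotaA}), since taking $H$ to be a minimum strong spanning subdigraph of $D$ makes $L(H)$ a strong, absorbing and dominating subdigraph of $L(D)$ on $\Omega(D)$ vertices, giving $\Omega_v(L(D))\le\Omega(D)$ and hence $smc_v(L(D))\ge |A(D)|-\Omega(D)+1$, which coincides with $smc(D)$ whenever Theorem \ref{main} is available.
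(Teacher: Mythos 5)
Your proposal is correct, and the upper bound is exactly the paper's argument: pass from an optimal SVMC-coloring $\Gamma$ of $L(D)$ to the induced arc-coloring $\Gamma'$ of $D$ and invoke Lemma \ref{goodpair}, whose four cases are exhaustive for $D\neq C_3$, applied to both orderings of each pair. Where you genuinely diverge is the lower bound. The paper does not lift an SMC-coloring directly; it takes a minimum strong spanning subdigraph $H$ of $D$ with $|A(H)|=\Omega(D)$, uses Proposition \ref{linea} to see that $L(H)$ is a strong, absorbing and dominating subdigraph of $L(D)$, applies inequality (\ref{cotaA}) to get $smc_v(L(D))\ge m-\Omega(D)+1$, and then identifies the right-hand side with $smc(D)$ via Theorem \ref{main} --- i.e., exactly the ``alternative'' you sketch in your last paragraph. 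Your primary route (lift an optimal SMC-coloring $c$ of $D$ to $\gamma(e)=c(e)$ on $V(L(D))$ and turn a monochromatic $(v,p)$-path $Q$ into the vertex-monochromatic path $e,Q,f$ in $L(D)$, checking that $e$ and $f$ cannot lie on $Q$) is more self-contained: it proves $smc_v(L(D))\ge smc(D)$ without knowing the value of $smc(D)$, and in particular it does not depend on Theorem \ref{main}, which the paper states only for \emph{oriented} graphs even though the line-digraph theorem is asserted for arbitrary strong digraphs. That robustness is a small but real advantage of your construction; the paper's route, in exchange, reuses machinery it has already built and makes the numerical identity $smc_v(L(D))=m-\Omega(D)+1$ explicit.
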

\begin{proof}
Let $D$ be a strong digraph of size $m$. By Theorem \ref{main} it follows that $smc(D)=m-\Omega(D)+1$ . Let $H$ be a strong and spanning subdigraph of $D$ of size  $\Omega(D)$.  By Proposition \ref{linea} it follows that  $L(H)$ is a strong, absorbing and dominanting subdigraph of $L(D)$. By  (\ref{cotaA}) it follows   
 $$
 smc_v(L(D))\ge |V(L(D))|-|V(H)|+1=m-\Omega(D)+1=smc(D).
 $$
 Observe that if $D$ is different from $C_3$, then every pair of vertices of $D$ satisfies one of the items of Lemma \ref{goodpair}. Hence, if $\Gamma$ is an SVMC-coloring of $L(D)$ it follows that the coloring  $\Gamma'$ of $D$ induced $\Gamma$ is an SMC-coloring of $D$ and therefore  $smc_v(L(D))\le smc(D)$, an the result follows.
\end{proof}

\section{Monochromatic vertex-connecting number of tournaments}\label{tournament}

In this section a condition on  $\Omega_v(T)$ of a strong tournament $T$ is given in order to find the exact value of $smc_v(T)$.
{
\begin{theorem}
Let $T$ be a strong tournament of diameter $d\ge 6$. If $\Omega(T)\le 2d-6$, then 
$$
smc_v(T)=n-\Omega_v(T)+1.
$$
\end{theorem}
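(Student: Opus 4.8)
The lower bound $smc_v(T)\ge n-\Omega_v(T)+1$ is exactly inequality (\ref{cotaA}), so the whole content is the matching upper bound. I would first record that, since every strong tournament is Hamiltonian, a Hamiltonian cycle is a strong spanning subdigraph with $n$ arcs, and no strong spanning subdigraph can have fewer (each vertex needs out-degree at least one); hence $\Omega(T)=n$ and the hypothesis $\Omega(T)\le 2d-6$ reads simply $n\le 2d-6$. Fix an SVMC-coloring $\Gamma$ using $k=smc_v(T)$ colors, let $a_1$ be the number of singular classes, let $C_1,\dots,C_t$ be the non-singular classes with $|C_i|=c_i\ge 2$, and put $N=\sum_i c_i=n-a_1$. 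Since $k=a_1+t$ and $n-k=\sum_i(c_i-1)=N-t$, the desired inequality $k\le n-\Omega_v(T)+1$ is equivalent to
\[
\Omega_v(T)\le \sum_{i=1}^{t}(c_i-1)+1=N-t+1 .
\]
Thus it suffices to exhibit a single strong, absorbing and dominating set of order at most $N-t+1$; by minimality of $\Omega_v(T)$ this yields the bound.

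Next I would set up the geometry coming from the diameter. Choose $u,v$ with $d(u,v)=d$; as $d\ge 6>1$ the unique arc between them is $v\to u$. Layer the vertices by out-distance from $u$, writing $L_i=\{x:d(u,x)=i\}$, so that $L_0=\{u\}$, $v\in L_d$, and $V=\bigcup_{i=0}^d L_i$. The tournament structure forces the usual layering rules: forward arcs occur only between consecutive layers, while every arc between non-consecutive layers points from the higher layer to the lower one. A $(u,v)$-vertex-monochromatic path has length at least $d$, so it carries at least $d-1$ internal vertices all of one color; together with the arc $v\to u$ these form a cycle $C$ of length at least $d+1$ whose interior is a single (hence non-singular) color class that, climbing one layer at a time, meets every $L_i$. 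Because $n\le 2d-6$, at most $n-(d+1)\le d-7$ vertices lie off $C$, and the same budget bounds $t$.

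The core of the argument is to convert $C$ into a strong, absorbing and dominating set of the required order. By the layering rules, a vertex $x\in L_j$ is dominated by any cycle vertex in $L_{\ge j+2}$ and absorbed by any cycle vertex in $L_{\le j-2}$, so $C$ already dominates and absorbs every vertex except those in the two boundary bands $L_0\cup L_1$ and $L_{d-1}\cup L_d$; these, together with the at most $d-7$ off-cycle vertices, I would treat directly, adjoining a controlled number of vertices and discarding the genuinely redundant ones while keeping strong connectivity through the forward climbing arcs. Finally I would run the bookkeeping that produces order at most $N-t+1$: each non-singular class $C_i$ is the interior of a monochromatic path serving a pair at distance $\delta_i$, which forces $c_i\ge\delta_i-1$ and lets me charge the $t-1$ ``extra'' classes against surplus vertices of the long backbone, deleting one backbone vertex per extra class without destroying domination or absorption.

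The main obstacle is exactly this consolidation step. The cycle $C$ behaves cleanly in the interior layers, but (i) the arc directions inside and across the boundary bands $L_1$ and $L_{d-1}$ are not determined by the layering, and (ii) a vertex of out-eccentricity or in-eccentricity at most $2$ (a ``king''-type vertex, which $d\ge 6$ does not exclude) need not be absorbed or dominated by $C$. Showing that all such boundary and king-type vertices can be covered, and that exactly $t-1$ backbone vertices remain free to delete, is where the hypothesis $n\le 2d-6$ must be spent: it simultaneously keeps the off-backbone set small and forces the backbone long enough to absorb the surplus. I expect this trimming-versus-covering balance, rather than the reduction or the layering, to be the crux of the proof.
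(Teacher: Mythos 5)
There is a genuine gap, and you have located it yourself: the ``consolidation step'' that you defer to the end --- covering the boundary bands, preserving strong connectivity while trimming, and charging the $t-1$ extra non-singular classes against surplus backbone vertices --- is the entire content of the upper bound, and none of it is carried out. As written, the proposal proves only the (already known) lower bound plus a correct but unused reformulation $\Omega_v(T)\le N-t+1$. The per-class charging scheme in particular looks hard to execute: deleting backbone vertices from the cycle $C$ threatens strong connectivity, the arcs inside $L_1$ and $L_{d-1}$ are unconstrained by the layering, and nothing in your outline forces the other non-singular classes to interact with $C$ at all.

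The paper avoids all of this with a cleaner device that makes the fine bookkeeping unnecessary. It takes the internal vertices $\{x_1,\dots,x_s\}$ of a $(u,v)$-vertex-monochromatic path (so $s\ge d-1$, all one color) and grows them to a \emph{maximal} monochromatic strong subdigraph $H$. It then shows $V(H)$ is absorbing and dominating by contradiction: a vertex $w$ not absorbed (resp.\ not dominated) by $V(H)$ yields a second long vertex-monochromatic path $P'$ with at least $d-3$ internal vertices, and in every case (whether $P'$ has the color of $H$ or not, using maximality of $H$ to force $V(P')$ off $H$) one counts at least $2d-4$ vertices tied up in at most two color classes, giving $smc_v(T)\le n-2d+6$, which contradicts $smc_v(T)\ge n-\Omega_v(T)+1\ge n-(2d-6)+1$. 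Since $V(H)$ then lies inside a \emph{single} chromatic class, $smc_v(T)\le n-|V(H)|+1\le n-\Omega_v(T)+1$ follows at once --- no layering, no trimming, and no need for the bound $\Omega_v(T)\le N-t+1$ (which is weaker than $\Omega_v(T)\le|V(H)|$ anyway). One further caution: the proof in the paper only ever uses $\Omega_v(T)\le 2d-6$, whereas your reading $\Omega(T)=n$, hence $n\le 2d-6$, is the literal one; your argument leans on $n\le 2d-6$ (to bound the off-cycle vertices by $d-7$) in a way the intended hypothesis would not support, so even a completed version of your route would prove a narrower statement.
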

\begin{proof}
Let $\Gamma$ be an SVMC-coloring of $T$ and let $u,v$ be two vertices of $T$ such that $d(u,v)=d\ge 6$. 
Let $P=(u,x_1,x_2,\dots , x_s,v)$ be a  $(u,v)$-vertex-monochromatic path. Since $P$ is a $(u,v)$-path of $T$, it follows that $s\ge d-1$. Observe that  the subdigraph induced by  $\{x_1,x_2,\dots , x_s\}$ is strong. 
Let $H$ be the biggest strong sudigraph of $T$ containing the set $\{x_1,x_2,\dots , x_s\}$ 
such that all the vertices of $H$ are colored the same. We claim that $V(H)$ is an absorbing and dominating set of $T$.
\\
\emph{Claim 1.} \emph{$V(H)$ is an absorbing set of $T$}. Suppose that there exists a vertex $w\in V(T)\setminus V(H)$ such that 
$x\rightarrow w$ for every vertex $x\in V(H)$. Since $\Gamma$ is an SVMC-coloring of $T$ there exists a $(w,v)$-vertex-monochromatic path $P'=(w,y_1,y_2,\dots ,y_r,v)$. Note that $(u,x_1,w,y_1,\dots , y_r,v)$ is a $(u,v)$-path, hence $r\ge d-3$. If the color of the internal vertices of $P'$ is different to the color of the vertices in $H$, then 
$$
smc_v(D)\le n-|V(H)|-(|V(P')|-2)+2\le n-(d-1)-(d-3)+2=n-2d+6.
$$
Combining the above inequality with  (\ref{cotaA}) it follows that 
$$
n-(2d-6)+1\le n-\Omega_v(T)+1\le smc_v(T)\le n-2d+6,
$$
giving a contradiction. 
Hence, the color of the internal vertices of $P$ is equal to the color of the vertices of $H$. Observe that $x_s\rightarrow y_1$, otherwise $(u,x_1,w,y_1,x_s,v)$ is a $(u,v)$-path of length $5$ contradicting that $d(u,v)=d\ge 6$. Furthermore, for  every $y_i\in V(P')$, $i=2,\dots s$, it follows that $x\rightarrow y_i$. If $y_i\rightarrow x$ for some $i=2,\dots ,s$, the subdigraph induced by $V(H)\cup \{y_1,y_2, \dots ,y_s\}$ would be a strong subdigraph of $T$ bigger than $H$, contradicting the election of $H$. 
Therefore $y\notin V(H)$ for every $y\in V(P')$ and
$$
smc_v(D)\le n-|V(H)|-(|V(P')|-2)+1\le n-(d-1)-(d-3)+1=n-2d+5,
$$
and using (\ref{cotaA}), a contradiction is obtained.

\emph{Claim 2.} 
\emph{$V(H)$ is a dominating set of $T$}. Suppose that there exists a vertex $w\in V(T)\setminus V(H)$ such that 
$w\rightarrow x$ for every $x\in V(H)$. Let $P'=(u,y_1,y_2,\dots, y_r,w)$ be a $(u,w)$-vertex-monochromatic path.
Since 
$(u,y_1,y_2,\dots $ $,w, x_s,v)$ is a $(u,v)$-path, it follows that  $s\ge d-3$.  If the color of the internal vertices of $P'$ is  different  to the color of the vertices of $H$, using a similar reasoning as in the proof of \emph{Claim 1} a contradiction is obtained.
Hence, the color of the internal vertices of $P$ 
is equal to the color of the vertices of $H$. Observe that $x_s\rightarrow y_1$, otherwise $(u,y_1,x_s,v)$ is a $(u,v)$-path of length 4, a  contradiction. Moreover, for every $y_i\in V(P')$, $i=2,\dots , y_s$, it follows that  $x\rightarrow y_i$, for every $x\in V(H)$. If not, the digraph induced by $V(H)\cup \{y_1,y_2, \dots ,y_i\}$ is a strong subdigraph of $T$ bigger than $H$, giving a contradiction. Therefore $y\notin V(H)$ for every $y\in V(P')$ and using a reasoning analogous to the proof of \emph{Claim 1} the result is followed.

Hence $H$ is an  absorbent, dominant and strong subdigraph of $T$. Since $\Gamma$ is an optimal SVMC-coloring of $T$ that assign the same color to every vertex in $H$, it follows that  $smc_v(T)=n-\Omega_v(T)+1$ and the result follows. 
\end{proof}}

\noindent {\bf Acknowledgments.} {}
 This research was supported by CONACyT-M\'exico, under project CB-222104.


\begin{thebibliography}{}

%\bibitem{A1967} Aigner, M., On the line graph of a directed graph, Math. Z. 102 (1967) 56-61.

%\bibitem{AM}  Alva-Samos, J., Montellano-Ballesteros, J.,  Rainbow connection in some digraphs.  {Graphs and Combinatorics.  In press (2016), 1--11.}

\bibitem{BG} Bang-Jensen, J., Gutin, G.,  Digraphs Theory Algorithms and Applications, Springer, London (2000).
   
\bibitem{B} Berge, C., Graphs, North-Holland, Amsterdam (1989).

\bibitem{CY11} Caro, Y., Yuster, R., Colorful monochromatic connectivity, Disc. Math., 311 (2011), 1786--1792.

\bibitem{CJMZ08} Chartrand, G., Johns, G. L., McKeon, K. A., Zhang, P.,  Rainbow connection in graphs, 
Mathematica Bohemica, 133 (2008), 85--98.



%\bibitem{DI}  Dorbec,  P.,  Schiermeyer, I.,  Sidorowicz, E.,   Sopena, E., Rainbow connection in oriented graphs, Disc. Applied Math., 179 (2014), 69--78.

\bibitem{GMG17} Gonz\'alez-Moreno, D., Guevara M. and  Montellano-Ballesteros, J. J., Monochromatic connecting colorings in strongly connected oriented graphs, Disc. Math. 340 (2017) 578--584.

%\bibitem{HN1960}�Harary F,  Norman, R. Z., Some properties of line digraphs, Rend. Circ. Mat. Palermo (2) 9 (1960), 161--168.

\bibitem{LS} Li, X., Sun, Y., Rainbow Connections of Graphs, Springer, London, 2013.

\bibitem{LW18}. Li, X. and Wu, D. A Survey on Monochromatic Connections of Graphs," Theory and Applications of Graphs: Vol. 0: Iss. 1,  (2018) Article 4
DOI: 10.20429/tag.2017.000104 


\bibitem{QLW17} Qingqiong C., Xueliang L. and  Di W., Some extremal results on the colorful monochromatic vertex-connectivity of a graph, J. Comb. Optim. (2018) 35: 1300. https://doi.org/10.1007/s10878-018-0258-x.
\end{thebibliography}
\end{document}